\documentclass[twoside,bezier,reqno]{amsart}

\usepackage{epsfig}
\usepackage{amsmath,amsthm,amsfonts,amssymb,amscd,euscript}

\input{epsf}
\newcommand{\filebegin}{\begin{document}}
\newcommand{\fileend}{\end{document}}
\makeatother
\def\thefootnote{}
\newcommand{\lo}{\longrightarrow}
\newcommand{\NMM}{\hspace*{2mm}}

\renewcommand{\baselinestretch}{1.1}

\input{epsf}
\renewcommand{\baselinestretch}{1.1}
\def\n{\noindent}%

\numberwithin{equation}{section}
\def\mapdown#1{\Big\downarrow\rlap
{$\vcenter{\hbox{$\scriptstyle#1$}}$}}
\newtheorem{theorem}{Theorem}[section]
\newtheorem{lemma}[theorem]{Lemma}
\newtheorem{proposition}[theorem]{Proposition}
\newtheorem{corollary}[theorem]{Corollary}

\theoremstyle{definition}
\newtheorem{definition}[theorem]{Definition}
\newtheorem{example}[theorem]{\sc Example}
\newtheorem{xca}[theorem]{Exercise}
\theoremstyle{remark}
\newtheorem{remark}[theorem]{Remark}

\begin{document}


\vspace*{2cm}
\begin{center}
{\bf\large Torse-forming vector field with certain deformations}
 \\[0.5cm]
{Beldjilali Gherici$^{1*}$\footnote{$^*$Corresponding Author},\: Benaoumeur Bayour$^{2}$ and  Habib Bouzir$^{3}$\\[2mm]
$^{1,2,3}$Laboratory of Quantum Physics of Matter  and Mathematical Modeling (LPQ3M),
University of Mascara, Algeria\\
[2mm]
{\tt E-mail: gherici.beldjilali@univ-mascara.dz,\: b.bayour@univ-mascara.dz,\: habib.bouzir@univ-mascara.dz}
} \\[2mm]
\end{center}%
\vspace*{0.5cm}
\begin{quotation}
\noindent
{\footnotesize
{\sc Abstract.}
Torse-forming vector fields are generalization of some important vector fields. In this paper, we present some techniques to transform a proper torse-forming vector field  into its special cases. Concrete examples are given.

}
\end{quotation}
\ \\
{\bf Keywords:}  Riemannian geometry, torse-forming vector field, deformations of metrics\\

\n \textbf{2000 Mathematics subject classification: } 53C15, 53C25, 53D10.

\markboth 
{ G. Beldjilali and ...}
 {Torse-forming vector field with certain deformations}


\section{Introduction}

It is well known that generalizing mathematical concepts is a prerequisite for most researchers. It is also often an important means of developing mathematics and answering many questions that have occupied the minds of researchers in many fields. For example, the notion of a Ricci soliton is a natural generalization of an Einstein metric; at the same time, it is a stationary solution of a famous PDE for Riemannian metrics, known under the name of the Ricci flow equation. Without forgetting that Ricci soliton itself has had many generalizations made recently in many works.

Generally, a mathematical concept is initially defined by a simple expression that satisfies the need to include it. Then questions begin: What if we replace the constant with a function? What if we add a differential one-form? What happens if we add a certain term? ... And so, little by little, the simple expression begins to take on longer and more complex forms. 

In principle, if these generalizations fail to answer any outstanding mathematical problems or fail to impact other concepts, then they can be considered an intellectual luxury. 

In this paper we present a step in the reverse direction of generalization; as a subject of study, we will discuss the concept of the torse-forming vector field as a generalization of both the  torqued vector field, the concircular vector field, the concurrent vector field, the recurrent vector field, and the parallel vector field.

 Torse-forming vector fields  demand special attention due to their applications not only in relativity and cosmology but in theory of submanifolds also. On a Riemannian manifold $(M^n, g)$, a the torse-forming vector field is a vector field $V$ that satisfies
\begin{equation}\label{TFVF}
		\nabla_X V = fX  + \theta(X)V,
	\end{equation}
for any vector field $X$ on $M$ where $\nabla$ is the Levi-Civita connection of $g$, $\theta$ is a one-form and $f$ is a smooth function on $(M^n, g)$. The 1-form $\theta$ is called the generating form and the function $f$ is called the conformal scalar.  If $\omega$ is the corresponding  $1$-form of $V$ , i.e., $\omega(X) = g(X,V)$ then for all $X, Y$ vector fields on $M$
\begin{equation}\label{TFVF2}
 (\nabla_X \omega)Y = g(\nabla_X V , Y)= fg(X,Y)+ \theta(X)\omega(Y).
	\end{equation}
 Some special types of torse-forming vector fields have been considered in various studies. A torse-forming vector field $V$ is called: 
\begin{itemize}
\item[1)] proper torse-forming if $f \neq 0$ and the 1-form $\theta$ is nowhere zero on a dense open subset of $M$.
\item[2)] torqued vector field if $V$ satisfying (\ref{TFVF}) with $\theta(V)=0$ (see \cite{CBY2, CBY3}).
\item[3)] concircular vector field  if $\theta$ is identically zero \cite{YK2} .
\item[4)] concurrent vector field if $\theta = 0$ and $f=1$.
\item[5)] reccurent vector field if $\theta \neq 0$ and $f=0$.
\item[6)] geodesic vector field if $\nabla_V V=0$.
\item[7)] parallel vector field if $\theta = 0$ and $f=0$.
\end{itemize}

\textbf{Agreement:} Through the rest of this paper, $(M,g)$ always denotes a Riemannian manifold.  $V$ denotes a proper torse-forming vector field on $M$ that satisfies (\ref{TFVF}) with $\Vert V \Vert = {\rm e}^{\rho}$, and  $\omega $  the  $1$-form corresponding to $V$, i.e.,  for all $X$ vector field on $M$,
$$ \omega(X)=g(V , X)\qquad and \qquad  \omega(V)={\rm e}^{2\rho}.$$

\begin{proposition}\label{Prop1}
For every  torse-forming vector field $V$ that satisfies (\ref{TFVF}),  we have
\begin{equation}\label{Theta}
		\theta ={\rm d}\rho - f {\rm e}^{-2 \rho} \omega.
	\end{equation}
\end{proposition}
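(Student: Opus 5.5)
The plan is to differentiate the squared norm of $V$ along an arbitrary vector field and compare the result with (\ref{TFVF2}). Throughout we use the Agreement: $\Vert V\Vert={\rm e}^{\rho}$. This implicitly assumes that $V$ vanishes nowhere, so that $\rho$ is a well-defined smooth function and ${\rm e}^{-2\rho}$ makes sense. In particular $\omega(V)=g(V,V)={\rm e}^{2\rho}$.

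First, fix an arbitrary vector field $X$ on $M$ and compute $X\big(g(V,V)\big)$ in two ways.

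\emph{On one side}, $g(V,V)={\rm e}^{2\rho}$, so
$$X\big(g(V,V)\big)=2{\rm e}^{2\rho}\,{\rm d}\rho(X).$$

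\emph{On the other side}, metric compatibility of the Levi-Civita connection gives $X\big(g(V,V)\big)=2g(\nabla_X V,V)$. Applying (\ref{TFVF2}) with $Y=V$, this becomes
$$2\big(f\,g(X,V)+\theta(X)\,\omega(V)\big)=2f\,\omega(X)+2{\rm e}^{2\rho}\,\theta(X).$$

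Equating the two expressions and dividing by $2{\rm e}^{2\rho}$, which is nowhere zero, yields
$$\theta(X)={\rm d}\rho(X)-f{\rm e}^{-2\rho}\,\omega(X).$$
Since $X$ is arbitrary, this is exactly (\ref{Theta}).

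There is no real obstacle here. The only point requiring care is the standing assumption that $V$ is nowhere zero, which is needed both to define $\rho$ and to divide by ${\rm e}^{2\rho}$. This is built into the Agreement ($\Vert V\Vert={\rm e}^{\rho}>0$). Otherwise the identity would hold only on the open set where $V\neq 0$.
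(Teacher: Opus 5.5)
Your proof is correct and follows the paper's argument: you compute $g(\nabla_X V,V)$ once from (\ref{TFVF}) as $f\omega(X)+{\rm e}^{2\rho}\theta(X)$ and once via metric compatibility as ${\rm e}^{2\rho}X(\rho)$, then divide by ${\rm e}^{2\rho}$. Your remark that this uses $V$ being nowhere zero (built into the Agreement) makes explicit an assumption the paper leaves implicit.
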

\begin{proof}
Using (\ref{TFVF}), for all $X$ vector field on $M$, we have
\begin{eqnarray}\label{Eq100}
g(\nabla_X V, V) &=& f g(X,V)  + \theta(X)g(V,V) \notag\\
&=& f \omega(X) + {\rm e}^{2\rho} \theta(X).
\end{eqnarray}
On the other hand, we have
\begin{eqnarray}\label{Eq110}
g(\nabla_X V, V)= X g(V,V)-g(V , \nabla_X V)  &\Leftrightarrow &   g(\nabla_X V, V) = \frac{1}{2} X({\rm e}^{2\rho}) \notag\\
&\Leftrightarrow & g(\nabla_X V, V) = {\rm e}^{2\rho} X(\rho).
\end{eqnarray}
Then, from (\ref{Eq100}) and (\ref{Eq110}) we get
$$ \theta(X) =  X(\rho) - f {\rm e}^{-2\rho} \omega(X).$$
This is what is required to be proven,

\end{proof}
\section{Torse-formming vector fields and deformations of  metrics}
\subsection{Conformal transformation}
Let $V$ be a proper torse-formming vector field on $(M,g)$ which satisfying (\ref{TFVF}). We know that a conformal transformation involves changing a Riemannian metric by multiplying it with a positive scalar function, known as the conformal factor. This change preserves the angles between vectors at each point, making the two metrics "conformally equivalent". The new metric $\tilde{g}$ is related to the original metric $g$ by the equation 
$$\tilde{g}={\rm e}^{2\sigma}g,$$
 where $\sigma$ is the conformal factor.

For a conformal transformation, it is well known that the Levi-Civita connection $\nabla$ and $\tilde{\nabla}$ associated with the metrics $g$ and $\tilde{g}$ respectively are connected by
\begin{eqnarray}\label{TransfConf}
\tilde{\nabla}_{X}Y= \nabla_{X}Y + X(\sigma)Y + Y(\sigma)X - g(X,Y){\rm grad}\sigma.
	\end{eqnarray}
By using (\ref{TFVF}) and Proposition \ref{Prop1}, we have
\begin{eqnarray}\label{TildNabV}
\tilde{\nabla}_{X}V &=& \nabla_{X}V + X(\sigma)V + V(\sigma)X - g(X,V){\rm grad}\sigma \notag\\
&=&  f X + \big( X(\rho) - f {\rm e}^{-2\rho} \omega(X) \big)V + X(\sigma)V + V(\sigma)X - \omega(X){\rm grad}\sigma \notag\\
&=&  \big(f + V(\sigma) \big) X + \big( X(\rho) + X(\sigma) \big)V - \omega(X) \big( f {\rm e}^{-2\rho} V +{\rm grad}\sigma \big).
\end{eqnarray}
It is clear that $V$ is a  torse-formming vector field on $(M,\tilde{g})$ if and only if
$${\rm grad}\sigma = -f {\rm e}^{-2\rho} V.$$
Also, under this condition, we get $ f + V(\sigma)=0$ and (\ref{TildNabV}) gives
$$\tilde{\nabla}_{X}V = \theta(X)V.$$
Consequently, $V$ is recurrent with respect to $\tilde{g}$. Hence, we conclude the following:
\begin{theorem}\label{Th1}
Let $V$ be a proper torse-formming vector field on $(M,g)$ which satisfying (\ref{TFVF}). $V$ is recurrent with respect to $\tilde{g}={\rm e}^{2\sigma}g$ where $\sigma \in \mathcal{C}^{\infty}(M)$ if and only if ${\rm grad}\sigma = -f {\rm e}^{-2\rho} V$.
\end{theorem}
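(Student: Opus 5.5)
The plan is to use the expression (\ref{TildNabV}) for $\tilde\nabla_X V$, obtained from the conformal change formula (\ref{TransfConf}) together with (\ref{TFVF}) and Proposition \ref{Prop1}, and to read off when it has the recurrent form $\tilde\nabla_X V=\tilde\theta(X)V$ for some $1$-form $\tilde\theta$. By definition, recurrence with respect to $\tilde g$ means exactly this, with conformal scalar $\tilde f=0$ and $\tilde\theta\neq0$. So we must decide when the right-hand side
\[
\big(f+V(\sigma)\big)X+\big(X(\rho)+X(\sigma)\big)V-\omega(X)\big(f{\rm e}^{-2\rho}V+{\rm grad}\,\sigma\big)
\]
is proportional to $V$ for every $X$.

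For the sufficiency direction, assume ${\rm grad}\,\sigma=-f{\rm e}^{-2\rho}V$. The last term vanishes. Moreover $V(\sigma)=g({\rm grad}\,\sigma,V)=-f{\rm e}^{-2\rho}{\rm e}^{2\rho}=-f$, so the coefficient of $X$ vanishes as well. Since $X(\sigma)=-f{\rm e}^{-2\rho}\omega(X)$, we get
\[
\tilde\nabla_XV=\big(X(\rho)-f{\rm e}^{-2\rho}\omega(X)\big)V=\theta(X)V,
\]
using (\ref{Theta}). Because $V$ is proper, $\theta$ is nowhere zero on a dense open set, so $V$ is recurrent for $\tilde g$ with the same generating form $\theta$.

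For necessity, suppose $\tilde\nabla_XV=\tilde\theta(X)V$ for all $X$. Then
\[
\big(f+V(\sigma)\big)X-\omega(X)\big(f{\rm e}^{-2\rho}V+{\rm grad}\,\sigma\big)\in{\rm span}(V)\qquad\text{for all }X.
\]
Set $W=f{\rm e}^{-2\rho}V+{\rm grad}\,\sigma$ and $c=f+V(\sigma)$, and note that $\omega(W)=c$. This identity is the key link.

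First take $X\perp V$ with $X\neq0$, which is possible when $\dim M\ge2$. Then $\omega(X)=0$, so $cX\in{\rm span}(V)$, which forces $c=0$.

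Next take $X=V$. This gives ${\rm e}^{2\rho}W\in{\rm span}(V)$, so $W=\lambda V$ for some function $\lambda$. Then $c=\omega(W)=\lambda{\rm e}^{2\rho}$, and $c=0$ gives $\lambda=0$, hence $W=0$, which is exactly ${\rm grad}\,\sigma=-f{\rm e}^{-2\rho}V$.

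The main obstacle is this necessity direction. One cannot simply "compare coefficients" of $X$, $V$ and $\omega(X)(\cdot)$, because these are not a priori independent. The splitting $X\perp V$ versus $X=V$, combined with the relation $\omega(W)=c$, is what makes the argument rigorous. The case $n=1$ needs a brief separate remark or can be excluded as trivial.
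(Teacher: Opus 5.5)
Your proof is correct and follows the paper's route: both read the answer off formula (\ref{TildNabV}), and your sufficiency computation ($V(\sigma)=-f$, hence $\tilde\nabla_XV=\theta(X)V$) is exactly the paper's.

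The paper simply asserts necessity as ``clear''. It also phrases that claim for torse-forming rather than recurrent, where it is actually false, e.g.\ for constant $\sigma$. So your splitting $X\perp V$ versus $X=V$, combined with $\omega(W)=f+V(\sigma)$, genuinely supplies the missing argument. One small correction: $n=1$ must be excluded outright rather than handled by a separate remark, since in dimension one every $\sigma$ makes $\tilde\nabla_XV$ proportional to $V$, and the necessity direction fails there.
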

\begin{example}
It is convenient to have some sort of hyperspherical coordinates on the unit sphere $\mathbb{S}^3$ in analogy to the usual spherical coordinates on $\mathbb{S}^2$. One such choice is to use $(x_1, x_2,x_3)$ where
\begin{equation}\label{parametrisationS3}
 \left\{
          \begin{array}{lll}
x_1 = \cos x_1,\\
x_2 = \sin x_1 \cos x_2,\\
x_3 = \sin x_1 \sin x_2 \cos x_3,\\
x_4 = \sin x_1 \sin x_2 \sin x_3,
          \end{array}
  \right.
\end{equation}
where $x_1$ and $x_2$ run over the range $0$ to $\pi$, and $x_3$ runs over $0$ to $2 \pi$.
  
One can define the Riemannian metric $g$ in these coordinates as follows:
$$ g = d x_1^2 + \sin^2 x_1 ( dx_2^2 + \sin^2 x_2 \;d x_3^2).$$
where
$$\left\lbrace   e_1 = \frac{\partial}{\partial x_1},\quad e_2 = \frac{1}{\sin x_1}\frac{\partial}{\partial x_2},\quad
e_3=\frac{1}{\sin x_1 \sin x_2}\frac{\partial}{\partial x_3}\right\rbrace ,$$
is an orthonormal basis for $g$. By Kozsul's formula, the covariant derivatives of the basis elements are as follows:
$$\begin{tabular}{lll}
$ \nabla_{e_1} e_1 = 0,$ & $\nabla_{e_1} e_2 =0,$ & $\nabla_{e_1} e_3 = 0$, \\ 
$\nabla_{e_2} e_1= \cot x_1 e_2,$ &  $\nabla_{e_2} e_2= -\cot x_1 e_1,$ & $\nabla_{e_2} e_3 =0,$ \\ 
$\nabla_{e_3} e_1= \cot x_1 e_3,$&  $\nabla_{e_3} e_2 = \frac{\cot x_2 }{\sin x_1} e_3,$ & $\nabla_{e_3} e_3 = -\cot x_1 e_1 - \frac{\cot x_2 }{\sin x_1} e_2.$ \\ 
\end{tabular} 
$$
Easily, we can see that for $ V = h(x_3) \sin x_1 e_1$ we have 
\begin{equation}\label{NabV1}
 \left\{
          \begin{array}{lll}
\nabla_{e_1} V =h(x_3) \cos x_1 e_1 ,\\
\nabla_{e_2} V = h(x_3) \cos x_1 e_2,\\
\nabla_{e_3} V =\frac{h'}{\sin x_2 } e_1 + h(x_3) \cos x_1 e_3,
          \end{array}
  \right.
\end{equation}
where $h' = \frac{\partial h}{\partial x_3}$. Then, for $\theta = \frac{h'}{h} dx_3$ we get
$$\nabla_{e_i} V =  h(x_3) \cos x_1 e_i + \theta(e_i) V,$$
hence, according to (\ref{TFVF}), $V$ is a proper torse-forming vector field on $(\mathbb{S}^3, g)$ with $f =h(x_3) \cos x_1$ and $\theta = \frac{h'}{h} dx_3$.

لBy taking $\tilde{g}= {\rm e}^{2 \sigma}g$ the condition ${\rm grad}\sigma = -f {\rm e}^{-2\rho} V$ gives the system
$$ \sigma_1 =- \cot x_1,\quad \sigma_2 =0 \quad and \quad \sigma_3 =0,$$
implies $\sigma = \ln \frac{c}{\sin x_1}$ where $c \in \mathbb{R}$. Hence,
$$ \tilde{g} = c^2\left( \frac{dx_1^2}{\sin^2 x_1}  + dx_2^2 + \sin^2 x_2 dx_3^2\right),$$
and the non zero components of the Levi-Civita connection corresponding to $ \tilde{g}$ are given by:
  $$ \tilde{\nabla}_{\tilde{e}_3} \tilde{e}_2 = \frac{\cot x_2}{c} \tilde{e}_3,  \quad and \quad \quad \tilde{\nabla}_{\tilde{e}_3} \tilde{e}_3 = -\frac{\cot x_2}{c} \tilde{e}_2,$$
where $\tilde{e}_i = \frac{\sin x_1}{c} e_i$  is an orthonormal basis for $\tilde{g}$.  

Now, with simple calculations we can verify that 
$$  \tilde{\nabla}_{\tilde{e}_i} V= \theta(\tilde{e}_i)V,$$ 
 which means that $V$ is a  recurrent vector field on $(\mathbb{S}^3, \tilde{g})$, which confirms the result of  theorem \ref{Th1}.

\end{example} 
\subsection{$\mathcal{D}$-isometric defomation}

Knowing that $\omega$ is a global $1$-form on $(M^n,g)$, the equation $\omega = 0$ defines an $(n-1)$-dimensional distribution $\mathcal{D}$ on $M$. We define on $M$ a Riemannian metric, denoted $\overline{g}$, by
\begin{equation}\label{TildeMetric}
\overline{g}(X, Y )=  g(X,Y)+ \omega(X)\omega(Y).
\end{equation}
Then, we have
$$\left\lbrace 
       \begin{array}{ll}
          \overline{g}(V, V)={\rm e}^{2\rho}+{\rm e}^{4\rho}\\
          \overline{g}(X , X)=g(X,X)\qquad \forall X \in \mathcal{D}
       \end{array}
\right.
$$
That is why, we refer to this construction as $\mathcal{D}$-isometric deformation.

\begin{proposition}
 Let $\nabla$ and $\overline{\nabla} $  denote the Riemannian connections of $ g$ and $\overline{g}$ respectively. Then, for all $X,Y $ vector fields on $M$, we have the relation:
\begin{eqnarray}\label{NablaOverline}
\overline{\nabla}_{X} Y &=& \nabla_X Y - \omega(X)\omega(Y){\rm grad}\rho\notag\\
&+& \frac{1}{1+{\rm e}^{2\rho}} \Big( f g(X,Y) +X(\rho) \omega(Y) + \omega(X)Y(\rho)\notag\\
&&\qquad\qquad +\big(V(\rho)-f {\rm e}^{-2\rho}\big)\omega(X)\omega(Y)\Big)V.
 \end{eqnarray}
\end{proposition}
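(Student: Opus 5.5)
The plan is to compute the difference tensor $S(X,Y)=\overline{\nabla}_XY-\nabla_XY$. It is symmetric, since both connections are torsion-free. For a metric of the form $\overline{g}=g+h$ with $h=\omega\otimes\omega$, the Koszul formula for $\overline{g}$, written with $\nabla$-derivatives, gives the standard identity
$$2\,\overline{g}(S(X,Y),Z)=(\nabla_Xh)(Y,Z)+(\nabla_Yh)(X,Z)-(\nabla_Zh)(X,Y).$$
I will evaluate the right-hand side using (\ref{TFVF2}) and Proposition \ref{Prop1}, and then solve for $S(X,Y)$ by inverting $\overline{g}$.

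First, $(\nabla_Xh)(Y,Z)=(\nabla_X\omega)(Y)\,\omega(Z)+\omega(Y)\,(\nabla_X\omega)(Z)$. Grouping the six terms by the factors $\omega(Z)$, $\omega(Y)$ and $\omega(X)$ gives
$$2\,\overline{g}(S(X,Y),Z)=\omega(Z)\big[(\nabla_X\omega)Y+(\nabla_Y\omega)X\big]+\omega(Y)\big[(\nabla_X\omega)Z-(\nabla_Z\omega)X\big]+\omega(X)\big[(\nabla_Y\omega)Z-(\nabla_Z\omega)Y\big].$$
By (\ref{TFVF2}), the first bracket equals $2fg(X,Y)+\theta(X)\omega(Y)+\theta(Y)\omega(X)$. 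The antisymmetric brackets reduce to $\theta(X)\omega(Z)-\theta(Z)\omega(X)$, and the $fg$ terms cancel there. Substituting $\theta={\rm d}\rho-f{\rm e}^{-2\rho}\omega$ from (\ref{Theta}), the $f{\rm e}^{-2\rho}$ part drops out of these antisymmetric combinations, leaving $X(\rho)\omega(Z)-Z(\rho)\omega(X)$. In the symmetric bracket, the same substitution gives $X(\rho)\omega(Y)+Y(\rho)\omega(X)-2f{\rm e}^{-2\rho}\omega(X)\omega(Y)$. Collecting everything and dividing by $2$ should give
$$\overline{g}(S(X,Y),Z)=\omega(Z)\Big(fg(X,Y)+X(\rho)\omega(Y)+Y(\rho)\omega(X)-f{\rm e}^{-2\rho}\omega(X)\omega(Y)\Big)-\omega(X)\omega(Y)Z(\rho).$$
I will double-check the coefficients here carefully.

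Next, I invert $\overline{g}$. If $\overline{g}(W,Z)=\alpha(Z)$ for all $Z$, then $g(W+\omega(W)V,Z)=\alpha(Z)$, so $W+\omega(W)V=\alpha^\sharp$, where $\sharp$ is taken with respect to $g$. Pairing with $V$ gives $\omega(W)(1+{\rm e}^{2\rho})=\alpha(V)$. Hence
$$W=\alpha^\sharp-\frac{\alpha(V)}{1+{\rm e}^{2\rho}}V.$$
Here $\alpha=c\,\omega-\omega(X)\omega(Y){\rm d}\rho$, where $c$ is the scalar in parentheses above. Then $\alpha^\sharp=cV-\omega(X)\omega(Y){\rm grad}\rho$ and $\alpha(V)={\rm e}^{2\rho}c-\omega(X)\omega(Y)V(\rho)$. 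The $V$-coefficient becomes $c-\frac{{\rm e}^{2\rho}c}{1+{\rm e}^{2\rho}}+\frac{\omega(X)\omega(Y)V(\rho)}{1+{\rm e}^{2\rho}}=\frac{c+V(\rho)\omega(X)\omega(Y)}{1+{\rm e}^{2\rho}}$. This reproduces exactly (\ref{NablaOverline}), including the term $-\omega(X)\omega(Y){\rm grad}\rho$ and the coefficient $\big(V(\rho)-f{\rm e}^{-2\rho}\big)\omega(X)\omega(Y)$.

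The main obstacle is purely bookkeeping: tracking the signs in the Koszul-type identity, and seeing that the $f{\rm e}^{-2\rho}$ contributions cancel in the antisymmetric parts but survive in the symmetric part. The inversion of $\overline{g}$ is straightforward once written in the $\alpha^\sharp$ form above.
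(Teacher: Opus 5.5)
Your proposal is correct and follows essentially the same route as the paper: the Koszul formula for $\overline{g}$ written relative to $\nabla$, then substitution of (\ref{TFVF2}) and (\ref{Theta}), where the paper packages your brackets as $\mathcal{L}_Vg$ and ${\rm d}\omega$. The paper then converts to a vector using $\omega(Z)=\frac{1}{1+{\rm e}^{2\rho}}\overline{g}(V,Z)$ and $Z(\rho)=\overline{g}\big({\rm grad}\rho-\frac{V(\rho)}{1+{\rm e}^{2\rho}}V,Z\big)$, which is exactly your $\alpha^\sharp$ inversion; your coefficients check out, including the cancellation of the $f{\rm e}^{-2\rho}$ terms in the antisymmetric brackets.
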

\begin{proof}
Using the Koszul formula for the Levi-Civita connection of a Riemannian metric $\overline{g}$, 
\begin{eqnarray*}
		2\overline{g}(\overline{\nabla}_X Y,Z) &=& X \overline{g}(Y, Z) + Y \overline{g}(Z, X) - Z \overline{g}(X, Y )\\
		&& -\overline{g}(X ,[Y, Z]) + \overline{g}(Y,[Z,X ]) + \overline{g}(Z,[ X,Y]),
	\end{eqnarray*}
one can obtain 
\begin{align*}
2\overline{g}(\overline{\nabla}_{X} Y,Z) = 2\overline{g}(\nabla_{X} Y,Z) &+ \big( (\nabla_X \omega)Y + (\nabla_Y \omega)X \big) \omega(Z)  \notag\\
&+ \big( (\nabla_X \omega)Z - (\nabla_Z \omega)X \big) \omega(Y) \notag\\
 &+ \big( (\nabla_Y \omega)Z - (\nabla_Z \omega)Y \big) \omega(X).
 \end{align*}
 Better yet, the following relation:
\begin{align}\label{Eq200}
2\overline{g}(\overline{\nabla}_{X} Y,Z) = 2\overline{g}(\nabla_{X} Y,Z) &+ ( \mathcal{L}_Vg)(X,Y) \omega(Z)  \notag\\
&+ 2 {\rm d}\omega(X,Z) \omega(Y)+ 2 {\rm d}\omega(Y,Z) \omega(X), 
 \end{align}
where $\mathcal{L}_V g$ is the Lie derivative of $g$ along $V$ and ${\rm d}$ denotes the exterior derivative.
Using (\ref{TFVF}), one can get
  \begin{eqnarray}\label{Eq205}
\omega(Z) &=& g(V,Z) \notag\\
&=& \overline{g}(\xi,Z) - {\rm e}^{2\rho}\omega(Z),
 \end{eqnarray}
 then,
 \begin{equation}\label{Eq210}
 \omega(Z) = \frac{1}{1+ {\rm e}^{2\rho}}\overline{g}(V,Z).
 \end{equation}
Also, we have
  \begin{eqnarray}\label{Eq215}
Z(\rho) &=& g({\rm grad} \rho,Z) \notag\\
&=& \overline{g}({\rm grad} \rho,Z) - \omega({\rm grad} \rho)\omega(Z) \notag\\
&=& \overline{g}\Big({\rm grad} \rho - \frac{V(\rho)}{1+ {\rm e}^{2\rho}}V,Z \Big).
 \end{eqnarray}
 So, by replacing (\ref{Eq210}) and (\ref{Eq215}) in (\ref{Theta}) we obtain
\begin{equation}\label{Eq220}
\theta(Z)= \overline{g}\Big( {\rm grad}\rho - \frac{1}{1+ {\rm e}^{2\rho}}\big( V(\rho) + f {\rm e}^{-2 \rho}\big)V,Z \Big).
\end{equation} 
Now, with the help of (\ref{TFVF}), (\ref{Eq210}) and (\ref{Eq220}) we have
  \begin{eqnarray}\label{Eq215}
 2 {\rm d}\omega(X,Z)  &=&g(\nabla_X V,Z) - g(\nabla_Z V,X) \notag\\
&=&  \theta(X)\omega(Z) - \theta(Z)\omega(X) \notag\\
&=& \overline{g}\Big( - \omega(X){\rm grad}\rho + \frac{1}{1+{\rm e}^{2\rho}} \big( X(\rho) + V(\rho)\omega(X) \big) V,Z \Big).
 \end{eqnarray}
Therefore, by substituting (\ref{Eq210}) and (\ref{Eq215}) in (\ref{Eq200}), we get
\begin{eqnarray}\label{Eq220}
2\overline{g}(\overline{\nabla}_{X} Y,Z) &=& 2\overline{g}(\nabla_{X} Y,Z)-
2 \omega(X)\omega(Y) \overline{g}({\rm grad}\rho, Z)\notag\\
&&+ \frac{1}{1+{\rm e}^{2\rho}}\Big((\mathcal{L}_Vg)(X,Y) +X(\rho) \omega(Y) + Y(\rho)\omega(X) \notag\\
&&\qquad\qquad\qquad + 2V(\rho)\omega(X)\omega(Y)\Big)\overline{g}(V,Z).  
 \end{eqnarray}
Also, using (\ref{Theta}) we compute
  \begin{eqnarray*}\label{Eq225}
 ( \mathcal{L}_Vg)(X,Y)  &=&g(\nabla_X V,Z) + g(\nabla_Z V,X) \notag\\
&=& 2 f g(X,Y) +\theta(X)\omega(Y) + \theta(Y)\omega(X) \notag\\
&=& 2 f g(X,Y) +X(\rho) \omega(Y) + Y(\rho)\omega(X) -2 f {\rm e}^{-2\rho} \omega(X)\omega(Y).
 \end{eqnarray*}
Hence, (\ref{Eq220}) becomes
\begin{eqnarray}\label{Eq230}
\overline{g}(\overline{\nabla}_{X} Y,Z) &=& \overline{g}(\nabla_X Y,Z)  - \omega(X)\omega(Y) \overline{g}({\rm grad}\rho, Z)\notag\\
&+& \frac{1}{1+{\rm e}^{2\rho}} \Big( f g(X,Y) +X(\rho)\omega(Y) + Y(\rho)\omega(X)\notag\\
&&\qquad\qquad +\big(V(\rho)-f {\rm e}^{-2\rho}\big)\omega(X)\omega(Y)\Big)\overline{g}(V,Z )  
 \end{eqnarray}
Consequently,
  \begin{eqnarray}\label{Eq230}
\overline{\nabla}_{X} Y &=& \nabla_X Y - \omega(X)\omega(Y){\rm grad}\rho\notag\\
&+& \frac{1}{1+{\rm e}^{2\rho}} \Big( f g(X,Y) +X(\rho) \omega(Y) + Y(\rho)\omega(X)\notag\\
&&\qquad\qquad +\big(V(\rho)-f {\rm e}^{-2\rho}\big)\omega(X)\omega(Y)\Big)V.
 \end{eqnarray}
 This completes the proof.
 \end{proof} 
Now by putting $Y=V$ in (\ref{NablaOverline}) with the use of using (\ref{TFVF}) and (\ref{Theta}) we get
  \begin{eqnarray}\label{Eq240}
\overline{\nabla}_{X} V &=& fX+ \big( \frac{{\rm e}^{2\rho}}{1+{\rm e}^{2\rho}} X(\rho) - f {\rm e}^{-2\rho}\omega(X)\big) V \notag\\
&+& \omega(X) \big( V(\rho)V  -  {\rm e}^{2\rho} {\rm grad}\rho \big).
 \end{eqnarray}
Note that for $V$ to be a torse-forming vector field on $(M,\overline{g})$, it is   necessary  that 
\begin{equation}\label{Eq245}
 V(\rho)V  -  {\rm e}^{2\rho} {\rm grad}\rho  =0.
\end{equation}
This condition gives
\begin{equation}\label{Eq250}
X(\rho) = {\rm e}^{-2\rho}V(\rho)\omega(X),
\end{equation}
by substituting (\ref{Eq245}) and (\ref{Eq250}) in (\ref{Eq240}) we obtain 
  \begin{eqnarray}\label{Eq255}
\overline{\nabla}_{X} V &=& fX+ {\rm e}^{-2\rho}\Big( \frac{1+ 2{\rm e}^{2\rho}}{1+{\rm e}^{2\rho}} V(\rho) - f \Big) \omega(X)V.
 \end{eqnarray}
Based on these facts, we state the following:
\begin{theorem}
Let $V$ be a proper torse-forming vector field on $(M,g)$ with $V^{\flat}=\omega$ and $\omega(V)={\rm e}^{2\rho}$. If
\begin{equation}\label{Eq260}
 V(\rho)V  -  {\rm e}^{2\rho} {\rm grad}\rho  =0,
\end{equation}
then $V$ is a torse-forming with respect to $\overline{g}$. Moreover, if
 $f=\frac{1+ 2{\rm e}^{2\rho}}{1+{\rm e}^{2\rho}} V(\rho)$ then $V$ is a concircular vector field on $(M, \overline{g})$ where $ \overline{g} = g + \omega \otimes \omega$.

In addition, if $V(\rho)=0$ then $V$ is parallel on $(M,\overline{g})$.
\end{theorem}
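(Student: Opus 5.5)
The plan is to compute $\overline{\nabla}_X V$ directly from the connection formula (\ref{NablaOverline}), which we may take as given, and then read off the three conclusions from the resulting expression. First I would put $Y=V$ in (\ref{NablaOverline}). I would use $\omega(V)={\rm e}^{2\rho}$ and $g(X,V)=\omega(X)$, replace $\nabla_X V$ by $fX+\theta(X)V$ from (\ref{TFVF}), and replace $\theta$ by ${\rm d}\rho-f{\rm e}^{-2\rho}\omega$ from Proposition \ref{Prop1}. Grouping the terms along $X$, along $V$, and the leftover term $\omega(X)\big(V(\rho)V-{\rm e}^{2\rho}{\rm grad}\rho\big)$ gives an expression of the shape (\ref{Eq240}). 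I expect this regrouping to be the main obstacle. The coefficients $\frac{1}{1+{\rm e}^{2\rho}}$ and the contributions $X(\rho)\omega(V)$, $\omega(X)V(\rho)$ and $\big(V(\rho)-f{\rm e}^{-2\rho}\big)\omega(X){\rm e}^{2\rho}$ must be collected carefully. So I would redo this step term by term rather than rely on the displayed (\ref{Eq240}), and check the exact coefficient of $\omega(X)V$.

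Next I would impose hypothesis (\ref{Eq260}). It says ${\rm grad}\rho={\rm e}^{-2\rho}V(\rho)V$, so the leftover term drops out. Taking the $g$-product with $X$ gives $X(\rho)={\rm e}^{-2\rho}V(\rho)\omega(X)$, so ${\rm d}\rho$ is a multiple of $\omega$. Substituting this, every term other than $fX$ becomes a function times $\omega(X)V$. Hence
$$\overline{\nabla}_X V=fX+\overline{\theta}(X)V,\qquad \overline{\theta}=\lambda\,\omega,$$
where $\lambda$ is the explicitly computed scalar, which the paper records in (\ref{Eq255}). By definition (\ref{TFVF}), now taken with respect to $\overline{g}$, this shows $V$ is torse-forming on $(M,\overline{g})$ with conformal scalar $f$ and generating form $\overline{\theta}$.

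For the second claim, $V$ is concircular exactly when $\overline{\theta}\equiv 0$. On the set where $V\neq 0$, so that $\omega$ is nowhere zero, this means $\lambda=0$. Solving $\lambda=0$ for $f$ gives the stated relation $f=\frac{1+2{\rm e}^{2\rho}}{1+{\rm e}^{2\rho}}V(\rho)$, provided the recomputed coefficient agrees with (\ref{Eq255}). For the last claim, assume both the concircular condition and $V(\rho)=0$. Then the concircular relation forces $f=0$, and (\ref{Eq260}) forces ${\rm grad}\rho=0$. Therefore $\overline{\nabla}_X V=0$ for all $X$, so $V$ is parallel on $(M,\overline{g})$.
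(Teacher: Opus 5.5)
Your plan is the paper's argument: set $Y=V$ in (\ref{NablaOverline}), substitute (\ref{TFVF}) and (\ref{Theta}), use (\ref{Eq260}) to kill the leftover term and make ${\rm d}\rho$ proportional to $\omega$, then read off $\overline{\theta}$ (and your cumulative reading of the last clause is the one under which it holds). Your caution about (\ref{Eq240}) is justified: the coefficient of $X(\rho)V$ there should be $\frac{1+2{\rm e}^{2\rho}}{1+{\rm e}^{2\rho}}$, not $\frac{{\rm e}^{2\rho}}{1+{\rm e}^{2\rho}}$, because the $X(\rho)$ coming from $\theta(X)$ was dropped. With that correction the term-by-term recomputation gives exactly (\ref{Eq255}), i.e. $\lambda={\rm e}^{-2\rho}\big(\frac{1+2{\rm e}^{2\rho}}{1+{\rm e}^{2\rho}}V(\rho)-f\big)$, so all three conclusions follow as you describe.
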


\begin{example}\label{EX1}
Let us denote $(x,y,z)$ the Cartesian coordinates in $M=\mathbb{R}^{3}$. Let $(e_{i})$ be the set of vector fields on $M$ defined by:
$$ e_1= \frac{1}{\lambda(x)}\frac{\partial}{\partial x} ,\qquad
 e_2= \frac{1}{h(x)\alpha(y,z)}\frac{\partial}{\partial y},\qquad
   e_3= \frac{1}{h(x)\beta(y,z)}\frac{\partial}{\partial z},$$
where $\lambda, \alpha, \beta$ and $h$ are three functions on $M$ non zero every where  such that
$$ g(e_i  , e_j )=\delta_{ij},\qquad  \forall i,j \in \{1,2,3\}.$$
 That is, the form of the metric $g$ becomes:
\begin{align*}
g= \left(
       \begin{array}{ccc}
        \lambda^2 & 0 & 0\\
        0 & h^2 \alpha^2 & 0\\
        0 & 0 & h^2 \beta^2
       \end{array}
\right).         
\end{align*}
  
By Kozsul's formula, the components of the Levi-Civita connection corresponding to $g$ are given by

$$\begin{tabular}{lll}
$ \nabla_{e_1} e_1 = 0,$ & $\nabla_{e_1} e_2 =0,$ & $\nabla_{e_1} e_3 = 0$, \\ 
$\nabla_{e_2} e_1=\frac{h'}{\lambda h} e_2,$ &  $\nabla_{e_2} e_2=- \frac{h'}{\lambda h} e_1 - \frac{\alpha_3}{\alpha \beta h} e_3,$ & $\nabla_{e_2} e_3= \frac{\alpha_3}{\alpha \beta h} e_2,$\\ 
$\nabla_{e_3} e_1=\frac{h'}{\lambda h} e_3,$&  $\nabla_{e_3} e_2 = \frac{\beta_2}{\alpha \beta h} e_3,$ & $\nabla_{e_3} e_3 = - \frac{h'}{\lambda h} e_1 -  \frac{\beta_2}{\alpha \beta h} e_2,$ \\ 
\end{tabular} 
$$
  where $ h' = \frac{\partial '}{\partial x_1}$, $ \alpha_i = \frac{\partial \alpha}{\partial x_i}$ and $ \beta_i = \frac{\partial \beta}{\partial x_i}$ with $i \in \{2,3\}$.
  
  Let's take $V = \mu \lambda e_1$ where $\mu=\mu(x)$ is a non zero function on $M$. One can get
  \begin{equation}\label{NabV2}
 \left\{
          \begin{array}{lll}
\nabla_{e_1} V = \frac{(\mu \lambda)'}{\lambda} e_1 ,\\
\nabla_{e_2} V = \frac{\mu h'}{h} e_2,\\
\nabla_{e_3} V =\frac{ \mu  h' }{h}  e_3.
          \end{array}
  \right.
\end{equation}
where $ (\mu \lambda)' = \frac{\partial (\mu \lambda)}{\partial x} $. 
Therefore, we can easily verify that 
$$\nabla_{e_i} V = \frac{\mu h'}{h} e_i + \frac{ h}{\mu \lambda} \left( \frac{\mu \lambda}{h}\right)' dx(e_i)V. $$
Hence, for $h' \neq 0$ and $ \mu \lambda \neq const$ we obtain $3$-parameters family of  proper torse-forming vector field on $(M,g)$.

Now, we apply the $\mathcal{D}$-isometric defomation as follows, we put $\overline{g}=g + \omega \otimes \omega$ and for we get
$$ \overline{g} = \lambda^2(1+\mu^2 \lambda^2) dx^2 + h^2(\alpha^2 dy^2 + \beta^2 dz^2).$$
This Riemannian metric admits
$$ \overline{e}_1= \frac{1}{\sqrt{1+\mu^2 \lambda^2}} e_1,\quad \overline{e}_2= e_2 \quad and \quad\overline{e}_3= e_3, $$
as an orthonormal basis so, the non zero components of the Levi-Civita connection corresponding to $ \overline{g}$ are given by:
 $$\begin{tabular}{lll}
$\overline{\nabla}_{\overline{e}_2} \overline{e}_1=\frac{h'}{\lambda h \sqrt{1+\mu^2 \lambda^2}} \overline{e}_2,$ &  $\overline{\nabla}_{e_2} \overline{e}_2=- \frac{h'}{\lambda h \sqrt{1+\mu^2 \lambda^2}} \overline{e}_1 - \frac{\alpha_3}{\alpha \beta h} \overline{e}_3,$\\
 $\overline{\nabla}_{\overline{e}_2} \overline{e}_3= \frac{\alpha_3}{\alpha \beta h} \overline{e}_2,$ & 
$\overline{\nabla}_{\overline{e}_3} \overline{e}_1=\frac{h'}{\lambda h \sqrt{1+\mu^2 \lambda^2}} \overline{e}_3,$\\
  $\overline{\nabla}_{\overline{e}_3} \overline{e}_2 = \frac{\beta_2}{\alpha \beta h} \overline{e}_3,$ & $\overline{\nabla}_{\overline{e}_3} \overline{e}_3 = - \frac{h'}{\lambda h \sqrt{1+\mu^2 \lambda^2}} \overline{e}_1 -  \frac{\beta_2}{\alpha \beta h} \overline{e}_2.$ 
\end{tabular} 
$$
One can easily check that the condition (\ref{Eq260}) is hold which implies that $V$ is a torse-forming vector field with respect to $\overline{g}$ and 
\begin{equation}
\overline{\nabla}_{\overline{e}_i} V = \frac{\mu h'}{h} \overline{e}_i + \mu^3 \lambda^2\Big( \frac{1 + 2 \mu^2 \lambda^2}{1+\mu^2 \lambda^2} \rho' -\frac{h'}{h}\Big)V.
\end{equation}
So, if $f=\frac{1+ 2{\rm e}^{2\rho}}{1+{\rm e}^{2\rho}} V(\rho)$ that is $\frac{h'}{h}= \frac{1 + 2 \mu^2 \lambda^2}{1+\mu^2 \lambda^2} \rho'$ then
$$ \overline{\nabla}_{\overline{e}_i} V = \frac{\mu h'}{h} \overline{e}_i,$$
i.e., $V$ is a concircular vector field on $(M, \overline{g})$.

Until we confirm that the condition above is verifiable i.e., $\frac{h'}{h}= \frac{1 + 2 \mu^2 \lambda^2}{1+\mu^2 \lambda^2} \rho'$, it is sufficient to take $\mu = 1$ and $\lambda = {\rm e}^{x}$ we get $ \rho = x$ and $ h =  {\rm e}^{x}\sqrt{1+ {\rm e}^{2x}}$.

\end{example}
\subsection{$\omega$-conformal deformation}

In this section, we will combine the previous two transformations together to obtain a new Riemannian metric that we call "\textit{$\omega$-conformal deformation}" or "\textit{$\mathcal{D}$-conformal deformation}".

Let $(M,g)$ be an $n$-dimensional Riemannian manifold. The equation $\omega=0$ defines an $n-1$-dimensional distribution $\mathcal{D}$ on $M$. By a
$\mathcal{D}$-conformal deformation  we mean a change of metric of the form
$$ \hat{g}= {\rm e}^{2 \sigma} \overline{g} = {\rm e}^{2 \sigma} (g + \omega \otimes \omega),$$
where $\sigma$ is a smooth function on $M$.

\begin{proposition}
 Let $\nabla$ and $\hat{\nabla} $  denote the Riemannian connections of $ g$ and $\hat{g}$ respectively. Then, for all $X,Y $ vector fields on $M$, we have the relation:
\begin{eqnarray}\label{NablaHat}
\hat{\nabla}_{X}Y &=& \nabla_X Y  + X(\sigma)Y + Y(\sigma)X \notag\\
&-& g(X,Y){\rm grad} \sigma - \omega(X)\omega(Y){\rm grad}(\sigma +\rho)\notag\\
&+& \frac{1}{1+{\rm e}^{2\rho}} \Big( \big(f + V(\sigma)\big) g(X,Y) +X(\rho) \omega(Y) + \omega(X)Y(\rho)\notag\\
&&\qquad\qquad +\big(V(\rho)+V(\sigma)-f {\rm e}^{-2\rho}\big)\omega(X)\omega(Y)\Big)V.
 \end{eqnarray}
\end{proposition}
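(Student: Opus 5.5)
The plan is to view $\hat{g}$ as a conformal change of the already deformed metric $\overline{g}$, and so compose the two connection formulas already in hand. Since $\hat{g}={\rm e}^{2\sigma}\overline{g}$, the conformal transformation formula (\ref{TransfConf}), applied with $\overline{g}$ in place of $g$, gives
$$\hat{\nabla}_X Y=\overline{\nabla}_X Y+X(\sigma)Y+Y(\sigma)X-\overline{g}(X,Y)\,\overline{{\rm grad}}\,\sigma ,$$
where $\overline{{\rm grad}}\,\sigma$ is the gradient of $\sigma$ with respect to $\overline{g}$. Note that $X(\sigma)={\rm d}\sigma(X)$ does not depend on the metric. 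I then substitute the expression (\ref{NablaOverline}) for $\overline{\nabla}_X Y$ proved in the previous proposition.

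The only new ingredient is $\overline{{\rm grad}}\,\sigma$ in terms of $g$-quantities, and I get it exactly as in the computation (\ref{Eq215}) for $\rho$. Since $\overline{g}(V,Z)=(1+{\rm e}^{2\rho})\omega(Z)$ by (\ref{Eq210}), we have for all $Z$
$$\overline{g}\Big({\rm grad}\,\sigma-\frac{V(\sigma)}{1+{\rm e}^{2\rho}}V,\,Z\Big)=g({\rm grad}\,\sigma,Z)+V(\sigma)\omega(Z)-V(\sigma)\omega(Z)=Z(\sigma).$$
Hence
$$\overline{{\rm grad}}\,\sigma={\rm grad}\,\sigma-\frac{V(\sigma)}{1+{\rm e}^{2\rho}}V .$$

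Next I expand the last term of the conformal formula using $\overline{g}(X,Y)=g(X,Y)+\omega(X)\omega(Y)$:
$$-\overline{g}(X,Y)\,\overline{{\rm grad}}\,\sigma=-g(X,Y){\rm grad}\,\sigma-\omega(X)\omega(Y){\rm grad}\,\sigma+\frac{V(\sigma)}{1+{\rm e}^{2\rho}}\big(g(X,Y)+\omega(X)\omega(Y)\big)V .$$

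Adding this to (\ref{NablaOverline}) and to $X(\sigma)Y+Y(\sigma)X$ produces three groupings:
\begin{itemize}
\item the term $-\omega(X)\omega(Y){\rm grad}\,\sigma$ combines with $-\omega(X)\omega(Y){\rm grad}\,\rho$ into $-\omega(X)\omega(Y){\rm grad}(\sigma+\rho)$;
\item the coefficient $f$ of $g(X,Y)V/(1+{\rm e}^{2\rho})$ becomes $f+V(\sigma)$;
\item the coefficient $V(\rho)-f{\rm e}^{-2\rho}$ of $\omega(X)\omega(Y)V/(1+{\rm e}^{2\rho})$ becomes $V(\rho)+V(\sigma)-f{\rm e}^{-2\rho}$.
\end{itemize}
This is exactly (\ref{NablaHat}).

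There is no real obstacle, since the torse-forming content is already encoded in (\ref{NablaOverline}). The only point needing care is to use the $\overline{g}$-gradient rather than the $g$-gradient in the conformal formula. Its correction term is precisely what shifts $f$ to $f+V(\sigma)$ and contributes the $V(\sigma)\omega(X)\omega(Y)$ term. As a sanity check, setting $\sigma=0$ recovers (\ref{NablaOverline}).
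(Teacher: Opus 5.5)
Your proposal is correct and follows the paper's proof essentially step for step. You apply (\ref{TransfConf}) to $\hat{g}={\rm e}^{2\sigma}\overline{g}$, identify $\overline{{\rm grad}}\,\sigma={\rm grad}\,\sigma-\frac{V(\sigma)}{1+{\rm e}^{2\rho}}V$, and substitute (\ref{NablaOverline}) using $\overline{g}=g+\omega\otimes\omega$, with your explicit grouping of terms filling in the bookkeeping the paper leaves to the reader.
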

\begin{proof}
Using (\ref{TransfConf}), we get
\begin{eqnarray}\label{TransfDConf}
\hat{\nabla}_{X}Y= \overline{\nabla}_{X}Y + X(\sigma)Y + Y(\sigma)X - \overline{g}(X,Y)\overline{{\rm grad}}\sigma.
	\end{eqnarray}
For all $X$ vector field on $M$, we compute
\begin{eqnarray}\label{Eq300}
\overline{g}(\overline{{\rm grad}}\sigma, X)  = X(\sigma)
 &=& g({\rm grad} \sigma, X) \notag\\
&=& \overline{g}( {\rm grad} \sigma, X) - V(\sigma)\omega(X) \notag\\
&=& \overline{g}\Big( {\rm grad} \sigma -\frac{ V(\sigma)}{1+ {\rm e}^{2\rho}} V,X\Big),
\end{eqnarray}	
then,
\begin{equation}\label{Eq310}
\overline{{\rm grad}}\sigma =  {\rm grad} \sigma -\frac{ V(\sigma)}{1+ {\rm e}^{2\rho}} V.
\end{equation}
Now, by substituting (\ref{NablaOverline}) and (\ref{Eq310}) in (\ref{TransfDConf}) taking into account $\overline{g} = g + \omega \otimes \omega$ we get our formula.
\end{proof}
Let's replace $Y$ by $V$ in (\ref{NablaHat}) and using (\ref{TFVF}), we get
\begin{eqnarray}\label{NablaHat2}
\hat{\nabla}_{X}V &=& \big(f + V(\sigma)\big) X + \theta(X)V  + X(\sigma)V  \notag\\
&-& (1+{\rm e}^{2 \rho})\omega(X){\rm grad} \sigma - {\rm e}^{2 \rho}\omega(X){\rm grad}(\rho)\notag\\
&+& \frac{1}{1+{\rm e}^{2\rho}} \Big(  {\rm e}^{2 \rho}X(\rho)  +(1+{\rm e}^{2 \rho})\big(V(\rho)+V(\sigma)\big)\omega(X)\Big)V.
 \end{eqnarray}
 Reorganize this equation using (\ref{Theta}), we find

\begin{eqnarray}\label{Eq400}
\hat{\nabla}_{X}V &=& \big(f + V(\sigma)\big) X \notag\\
&&+ \Big(  X(\sigma) + \big(V(\rho)+V(\sigma)- f {\rm e}^{-2 \rho}\big)\omega(X) + \frac{1+2{\rm e}^{2 \rho}}{1+{\rm e}^{2\rho}}  X(\rho) \Big)V  \notag\\
&&- (1+{\rm e}^{2 \rho})\omega(X){\rm grad} \sigma - {\rm e}^{2 \rho}\omega(X){\rm grad}(\rho).
 \end{eqnarray}
For $V$ to be a torse forming vector field with respect to $\hat{g}$, it is sufficient for it to be
\begin{equation}\label{Eq410}
 {\rm grad} \sigma= - \frac{{\rm e}^{2 \rho}}{1+{\rm e}^{2\rho}}{\rm grad}\rho.
\end{equation}
From this equation, we get
\begin{equation}\label{Eq420}
X(\sigma) = - \frac{{\rm e}^{2 \rho}}{1+{\rm e}^{2\rho}}X(\rho)\qquad and \qquad V(\sigma) = - \frac{{\rm e}^{2 \rho}}{1+{\rm e}^{2\rho}}V(\rho).
\end{equation}
By (\ref{Eq410}) and (\ref{Eq420}), (\ref{Eq400}) becomes
\begin{equation}\label{430}
\hat{\nabla}_{X}V = \Big(f - \frac{{\rm e}^{2 \rho}}{1+{\rm e}^{2\rho}} \Big) X + \Big(  X(\rho) + \Big( \frac{V(\rho)}{1+{\rm e}^{2\rho}} - f {\rm e}^{-2 \rho}\Big)\omega(X) \Big)V.
 \end{equation}
Hence, $V$ is a torse-forming with respect to $\hat{g}$. Therfore,
we can extract two important cases:\\
(1):\quad For $f = \frac{{\rm e}^{2 \rho}}{1+{\rm e}^{2\rho}}$ we get
$ \hat{\nabla}_{X}V = X(\rho)V$ this means that $V$ is reccurent on $(M, \hat{g})$.\\
(2):\quad For $ X(\rho) = \Big( f {\rm e}^{-2 \rho} -\frac{V(\rho)}{1+{\rm e}^{2\rho}}\Big)\omega(X)$, by taking $X=V$ we get $  f =\frac{1+2{\rm e}^{2\rho}}{1+{\rm e}^{2\rho}}V(\rho)$ then we obtain $ \hat{\nabla}_{X}V = V(\rho)X$.
By combining the arguments of the above discussion, we obtain the following theorem:
\begin{theorem}
Let $V$ be a proper torse-forming vector field on $(M,g)$ with $V^{\flat}=\omega$ and $\omega(V)={\rm e}^{2\rho}$. If
\begin{equation}\label{Eq450}
 {\rm grad} \sigma= - \frac{{\rm e}^{2 \rho}}{1+{\rm e}^{2\rho}}{\rm grad}\rho.
\end{equation}
then $V$ is a torse-forming with respect to $\hat{g}$. Moreover,\\
\textbf{(i)}\quad if $f = \frac{{\rm e}^{2 \rho}}{1+{\rm e}^{2\rho}}$ then
 $V$ is reccurent vector field on $(M, \hat{g})$.\\
\textbf{(ii)}\quad if $ {\rm d}\rho = \Big( f {\rm e}^{-2 \rho} -\frac{V(\rho)}{1+{\rm e}^{2\rho}}\Big)\omega$ then  $V$ is a concircular vector field on $(M, \hat{g})$.

In addition, if $V(\rho)=0$ then $V$ is parallel on $(M,\hat{g})$.
\end{theorem}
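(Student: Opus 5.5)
The plan is to specialize the connection formula (\ref{NablaHat}) to $Y=V$ and then impose (\ref{Eq450}); everything reduces to bookkeeping of three kinds of terms: multiples of $X$, multiples of $V$, and multiples of $\omega(X)\,{\rm grad}\rho$.

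\emph{Step 1.} Start from (\ref{Eq400}), which follows from (\ref{NablaHat}) with $Y=V$ together with (\ref{TFVF}), $\omega(V)={\rm e}^{2\rho}$ and Proposition \ref{Prop1}. The only obstruction to the torse-forming form is the term
$$-(1+{\rm e}^{2\rho})\,\omega(X)\,{\rm grad}\sigma-{\rm e}^{2\rho}\,\omega(X)\,{\rm grad}\rho .$$
Under (\ref{Eq450}) the first summand equals ${\rm e}^{2\rho}\omega(X){\rm grad}\rho$, so the two summands cancel identically. Hence $\hat\nabla_XV=\hat f X+\hat\theta(X)V$, which is the first assertion.

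\emph{Step 2.} Identify $\hat f$ and $\hat\theta$ explicitly. Use (\ref{Eq420}), namely $X(\sigma)=-\frac{{\rm e}^{2\rho}}{1+{\rm e}^{2\rho}}X(\rho)$ and the same with $X=V$. The coefficient of $X$ becomes
$$\hat f=f+V(\sigma)=f-\frac{{\rm e}^{2\rho}}{1+{\rm e}^{2\rho}}V(\rho).$$
In the coefficient of $V$, the $X(\rho)$ terms combine as $\frac{-{\rm e}^{2\rho}+1+2{\rm e}^{2\rho}}{1+{\rm e}^{2\rho}}X(\rho)=X(\rho)$. Also $V(\rho)+V(\sigma)=\frac{V(\rho)}{1+{\rm e}^{2\rho}}$. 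Therefore
$$\hat\theta={\rm d}\rho+\Big(\frac{V(\rho)}{1+{\rm e}^{2\rho}}-f{\rm e}^{-2\rho}\Big)\omega .$$
I expect this to be the main point requiring care. Equation (\ref{430}) as printed drops the factor $V(\rho)$ in $\hat f$, and I would recompute it from (\ref{NablaHat}) rather than trust it.

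\emph{Step 3.} Read off the special cases.
\begin{itemize}
\item[(i)] Recurrence means $\hat f=0$, i.e.\ $f=\frac{{\rm e}^{2\rho}}{1+{\rm e}^{2\rho}}V(\rho)$. I would state (i) with this corrected hypothesis, which presumably is what is intended. Then $\hat\nabla_XV=\hat\theta(X)V$.
\item[(ii)] The hypothesis is exactly $\hat\theta=0$, so $\hat\nabla_XV=\hat fX$ and $V$ is concircular on $(M,\hat g)$. Evaluating the hypothesis on $V$ gives $V(\rho)\big(1+\frac{{\rm e}^{2\rho}}{1+{\rm e}^{2\rho}}\big)=f$, i.e.\ $f=\frac{1+2{\rm e}^{2\rho}}{1+{\rm e}^{2\rho}}V(\rho)$. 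Substituting this into $\hat f$ gives $\hat f=V(\rho)$.
\end{itemize}

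\emph{Step 4.} For the final claim, read it within case (ii). If additionally $V(\rho)=0$, then $\hat f=V(\rho)=0$ and $\hat\theta=0$, so $\hat\nabla V=0$ and $V$ is parallel on $(M,\hat g)$. Outside case (ii), $V(\rho)=0$ alone only gives $\hat f=f$, so this interpretation of the statement is necessary.
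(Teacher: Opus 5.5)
Your proposal is correct and follows the paper's own argument. You specialize the $\hat\nabla$ formula to $Y=V$ to get (\ref{Eq400}), use (\ref{Eq450}) to cancel the two $\omega(X)$-gradient terms, and read off $\hat f=f+V(\sigma)$ and $\hat\theta={\rm d}\rho+\big(\frac{V(\rho)}{1+{\rm e}^{2\rho}}-f{\rm e}^{-2\rho}\big)\omega$, with (ii) exactly $\hat\theta=0$ (then $\hat f=V(\rho)$) and the parallel claim read inside case (ii), as the paper does. Your correction $\hat f=f-\frac{{\rm e}^{2\rho}}{1+{\rm e}^{2\rho}}V(\rho)$, and hence the hypothesis $f=\frac{{\rm e}^{2\rho}}{1+{\rm e}^{2\rho}}V(\rho)$ in (i), is right: the printed (\ref{430}) and (i) drop the factor $V(\rho)$, whereas the paper's own conclusions $\hat\nabla_XV=X(\rho)V$ in case (1), $\hat\nabla_XV=V(\rho)X$ in case (2), and the $\hat f$ formula in the last example (where $V(\rho)=1$ hides the typo) all use the corrected coefficient.
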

 \begin{example}
 Based on the above example \ref{EX1}, we  apply the $\omega$-conformal deformation as follows, we put 
 $$\hat{g}= {\rm e}^{2 \sigma} \overline{g} = {\rm e}^{2 \sigma} (g + \omega \otimes \omega).$$ 
This Riemannian metric admits
$$ \hat{e}_1= \frac{{\rm e}^{- \sigma}}{\sqrt{1+\mu^2 \lambda^2}} e_1,\quad \hat{e}_2= {\rm e}^{- \sigma} e_2 \quad and \quad \hat{e}_3= {\rm e}^{- \sigma}e_3, $$
as an orthonormal basis so, the non zero components of the Levi-Civita connection corresponding to $ \overline{g}$ are given by:
 $$\begin{tabular}{lll}
 $\hat{\nabla}_{\hat{e}_1} \hat{e}_1= -{\rm e}^{- \sigma} \Big(\frac{\sigma_2}{h \alpha} \hat{e}_2 + \frac{\sigma_3}{h \beta} \hat{e}_3\Big),$ &  $\hat{\nabla}_{e_1} \hat{e}_2=\frac{\sigma_2 {\rm e}^{- \sigma}}{h \alpha} \hat{e}_1,$\\
 $\hat{\nabla}_{e_1} \hat{e}_3=\frac{\sigma_3 {\rm e}^{- \sigma}}{h \beta} \hat{e}_1,$ & $\hat{\nabla}_{\hat{e}_2} \hat{e}_1=\frac{ {\rm e}^{- \sigma}(\sigma_1 h + h')}{h \lambda \sqrt{1+\mu^2 \lambda^2}} \hat{e}_2,$\\
  $\hat{\nabla}_{e_2} \hat{e}_2=- {\rm e}^{- \sigma}\Big(\frac{ \sigma_1 h + h'}{h \lambda \sqrt{1+\mu^2 \lambda^2}} \hat{e}_1 + \frac{ \sigma_3 \alpha + \alpha_3}{h \alpha \beta} \hat{e}_3 \Big),$ & $\hat{\nabla}_{\hat{e}_2} \hat{e}_3= \frac{ {\rm e}^{- \sigma}(\sigma_3 \alpha + \alpha_3)}{h \alpha \beta} \hat{e}_2,$\\ 
$\hat{\nabla}_{\hat{e}_3} \hat{e}_1=\frac{ {\rm e}^{- \sigma}(\sigma_1 h + h')}{h \lambda \sqrt{1+\mu^2 \lambda^2}} \hat{e}_3,$ &  $\hat{\nabla}_{\hat{e}_3} \hat{e}_2 = \frac{ {\rm e}^{- \sigma}(\sigma_2 \beta + \beta_2)}{h \alpha \beta} \hat{e}_3,$\\
 $\hat{\nabla}_{\hat{e}_3} \hat{e}_3 = - {\rm e}^{- \sigma}\Big(\frac{\sigma_1 h + h'}{h \lambda \sqrt{1+\mu^2 \lambda^2}} \hat{e}_1 + \frac{ \sigma_2 \beta + \beta_2}{h \alpha \beta} \hat{e}_2 \Big),$ 
\end{tabular} 
$$
where $ \sigma_i = \frac{\partial \sigma}{\partial x_i}$. In the above example \ref{EX1}, it was proven that $V = \mu \lambda e_1$ where $\mu=\mu(x)$ is a proper torse-forming vector field on $(M,g)$ for $h' \neq 0$ and $ \mu \lambda \neq const$. 

From condition (\ref{Eq450}), one can get $\sigma = -\ln\sqrt{1+\mu^2 \lambda^2}$
Then we get 
$$ \hat{\nabla}_{\hat{e}_i}V = \hat{f} X + \hat{\theta} V \qquad where \quad \hat{f} = \mu \Big(\frac{h'}{h}-\frac{u \lambda (\mu \lambda)'}{1 + \mu^2 \lambda^2}\Big)\quad and \quad \hat{\theta} = \frac{(\mu \lambda)'}{ u \lambda }- \hat{f},$$
i.e., $V$ is a torse-forming vector field on $(M, \hat{g}$. 

Now, as we did in the example above, let's take $\mu = 1$ and $\lambda = {\rm e}^{x}$ then, if we put $ \hat{f}=0$ we get $ h = \sqrt{1+{\rm e}^{2 x}}$ and $V$ becomes reccurent on $(M, \hat{g})$. But, if we put $\hat{\theta}=0$ we find $ h = {\rm e}^{x}\sqrt{1+{\rm e}^{2 x}}$ and $V$ becomes concircular on $(M, \hat{g})$.

 \end{example}

\providecommand{\bysame}{\leavevmode\hbox
to3em{\hrulefill}\thinspace}



\begin{thebibliography}{99}
        

\bibitem{CBY1} Chen, B. Y., Some results on concircular vector fields and their applications to Ricci solitons, Bull. Korean Math. Soc., 52(2015),
1535-1547.
\bibitem{CBY2} Chen, B. Y., Rectifying submanifolds of Riemannian manifolds and torqued vector fields, Krajujevac Math. J., 41(2017), 93-103
\bibitem{CBY3} Chen, B. Y., Classification of torqued vector fields and its applications to Ricci solitons. Kragujevac J. Math. 41(2), 239–250 (2017).
\bibitem{YK2} Yano, K., Concircular geometry I, Concircular transformations, Proc. Imp. Acad. Tokyo, 16(1940), 195-200.
\end{thebibliography}
\end{document}